\newtheorem{theorem}{{\bf Theorem}}
\newtheorem{lemma}{{\bf Lemma}}
\newtheorem*{corollary}{{\bf Corollary}}
\newtheorem{remark}{{\bf Remark}}
\newtheorem*{thm2}{{\bf Corollary*}}
\newcommand{\RR}{\mathbb{R}}
\newcommand{\Ss}{\mathbb{S}}
\newcommand{\BB}{\mathbb{B}}
\begin{document}

\title{ Uniformly distributed sequences in the orthogonal group and on the Grassmannian manifold 
}

\author{ Florian Pausinger \footnote{ IST Austria, Am Campus 1, A-3400 Klosterneuburg, Austria. E-Mail: \emph{florian.pausinger@ist.ac.at} }}

\date{}

\maketitle

\begin{abstract}
Quasi-Monte Carlo methods replaced classical Monte Carlo methods in many areas of numerical analysis over the last decades. The purpose of this paper is to extend quasi-Monte Carlo methods into a new direction.
We construct and implement a uniformly distributed sequence in the orthogonal group $O(n)$. From this sequence we obtain a uniformly distributed sequence on the Grassmannian manifold $G(n,k)$, which we use to approximate integral-geometric formulas. 
We show that our algorithm compares well with classical random constructions and, thus, motivate various directions for future research.
\\[6pt]
{\bf Keywords: } Uniform distribution, compact topological group, orthogonal group, Grassmannian manifold, Crofton formula.
\\[0pt]
{\bf MSC2010:} 11K41, 22C05, 65D30.
\\[0pt]
\end{abstract}



\section{Introduction}
\label{sec1}

Quasi-Monte Carlo methods replaced classical Monte Carlo methods in many areas of numerical analysis over the last decades. This is due to improved constructions of low discrepancy point sets and sequences which yield a fast decay of the occuring approximation errors thus outperforming random point sets in many practical situations; see \cite{DP10}.
The underlying pure mathematical framework, known as \emph{uniform distribution theory}, is very well developed in abstract settings. However, applications and concrete constructions of point sets are mainly studied and applied in the $n$-dimensional unit cube $[0,1)^n$ or on the unit sphere $\Ss^{n-1}$. These spaces have nice algebraic properties which allow a precise analysis of the appearing approximation errors.

The purpose of our paper is to extend quasi-Monte Carlo methods into a new direction. Let $G(n,k)$ denote the Grassmannian manifold, which is the space of all $k$-dimensional linear subspaces of $\RR^n$. Compact topological groups, especially the \emph{non-abelian} orthogonal group $O(n)$, and corresponding homogenous spaces, like $G(n,k)$, play an important role in many areas such as statistics, physics and integral geometry. While it is well-known how to generate uniform random elements in $O(n)$ (for an overview see \cite{Dia87} and references therein, especially \cite{Hei78,Ste80,TT82}), there are so far only existence results for the quasi-random setting in the form of uniformly distributed sequences in compact, non-abelian topological groups; see \cite{KuNi74}.

We present a mathematical framework that allows to construct uniformly distributed sequences in compact topological groups. In particular,
the main contributions of our paper are 
\begin{itemize}
\item[i)] the concrete construction of a uniformly distributed sequence in $O(n)$,
\item[ii)] an application of this sequence to the Grassmannian manifold $G(n,k)$, and
\item[iii)] an implementation and numerical comparison of our sequences to classical random constructions via the approximation of concrete integral geometric integrals.
\end{itemize}
On the theoretical side, we combine different results in order to extend the Monte Carlo construction to the quasi-random setting yielding the desired sequences.
On the practical side, we implement our results and give a proof of concept by showing that our quasi-random construction compares well with the random construction in concrete examples. 
Along the way, we encounter various interesting questions for future research aimed to extend the success of quasi-Monte Carlo methods into the direction of compact topological groups.

\paragraph{The subgroup algorithm. }
For every $n \geq 2$ the orthogonal group $O(n)$ and its normal subgroup $SO(n)$ are represented by orthogonal $n \times n$ matrices, either with determinant $\pm 1$ or only $+1$, which both form a group since they are closed under multiplication and taking inverses.
We construct a sequence by adapting the subgroup algorithm of Diaconis, Shahshahani \cite{Dia87} using a result of Veech \cite{Vee71}. Interestingly, this algorithm works for general (abelian or non-abelian) compact topological groups.

The idea of the subgroup algorithm is to consider a nested chain of compact (sub)groups (not necessarily normal).
We present the algorithm for our particular case in which we consider the chain
\begin{equation*}
O(n) \supset O(n-1) \supset O(n-2) \supset \ldots \supset O(2),
\end{equation*}
where $O(n-1)$ is the subgroup of $O(n)$ obtained via fixing the (unit column) vector $e_1 \in \RR^n$; that is $O(n-1)=\{ \Gamma \in O(n): \Gamma e_1 = e_1 \}$.
Consider the top two terms of the chain. 
The key lemma in \cite{Dia87} claims that the product of a uniform random element of $O(n-1)$, and a uniform random coset representative for $O(n-1)$ in $O(n)$ is a uniform random element in $O(n)$. 
We follow the topological convention and refer to the dimension of the manifold when speaking about the general unit sphere $\Ss^{n-1}$; consequently every $x \in \Ss^{n-1}$ is an $n$-dimensional vector.
It is well-known that $\Ss^{n-1}$ can be used to identify the cosets of $O(n-1)$ in $O(n)$, since $\Ss^{n-1} \cong O(n)/O(n-1)$. 
Thus, knowing how to find random elements in $O(n-1)$ and on $\Ss^{n-1}$ suffices to obtain a random element in $O(n)$ and hence random elements can be generated inductively.

We extend this idea to the quasi-random setting yielding the following theorem and its corollary which we apply to concrete integrals.
\begin{theorem} \label{thm1}
Given a uniformly distributed (ud) sequence on $\Ss^{n-1}$ and a ud sequence in $O(n-1)$, there exists an explicitly constructible ud sequence in $O(n)$.
\end{theorem}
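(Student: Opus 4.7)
The plan is to follow the subgroup algorithm of Diaconis--Shahshahani, replacing random sampling by deterministic ud sequences at every step. First I fix once and for all a measurable section $s:\Ss^{n-1}\to O(n)$ of the quotient map $O(n)\to O(n)/O(n-1)\cong\Ss^{n-1}$, so that $s(x)e_1=x$ for every $x\in\Ss^{n-1}$; a concrete choice is given by the Householder reflection sending $e_1$ to $x$, with a convention on the measure-zero exceptional set. Denoting by $\tilde\Gamma=\mathrm{diag}(1,\Gamma)\in O(n)$ the standard embedding of $\Gamma\in O(n-1)$ as the stabilizer of $e_1$, I then define
\[
\Lambda_k \;:=\; s(x_k)\cdot\tilde{\Gamma}_k\qquad(k\in\NN),
\]
where $(x_k)$ and $(\Gamma_k)$ are the given ud sequences on $\Ss^{n-1}$ and in $O(n-1)$ respectively. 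In the probabilistic setting, the Diaconis key lemma immediately yields that such a product is Haar-distributed in $O(n)$; my construction simply replaces the two independent random inputs by two deterministic ud inputs.

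To verify that $(\Lambda_k)$ is ud in $O(n)$ I would invoke the Weyl criterion via Peter--Weyl: it is enough to show that for every non-trivial irreducible unitary representation $\rho:O(n)\to U(d_\rho)$,
\[
\frac{1}{N}\sum_{k=1}^{N}\rho(\Lambda_k) \;\longrightarrow\; 0 \qquad\text{as } N\to\infty.
\]
Writing $\rho(\Lambda_k)=\rho(s(x_k))\,\rho(\tilde\Gamma_k)$ and decomposing $\rho|_{O(n-1)}$ via the branching rules into irreducibles $\sigma$ of $O(n-1)$, the averages split into contributions involving the subgroup sequence and contributions involving the section. The former are controlled by ud of $(\Gamma_k)$ applied to the matrix coefficients of each $\sigma$; the latter are controlled by ud of $(x_k)$ applied to the continuous matrix-valued map $x\mapsto\rho(s(x))$. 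The quotient Fubini formula for the fibration $O(n-1)\hookrightarrow O(n)\twoheadrightarrow\Ss^{n-1}$ glues the two halves together. The precise input that makes this gluing work in the \emph{deterministic} setting is Veech's theorem \cite{Vee71}, which is tailored to assemble uniform distribution on a compact group from uniform distribution on a closed subgroup and on the corresponding homogeneous quotient.

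The main obstacle is the absence of independence. In the Monte Carlo construction one relies on $x_k$ and $\Gamma_k$ being drawn independently, so the joint distribution of $(s(x_k),\tilde\Gamma_k)$ is automatically the product measure that pushes forward to Haar on $O(n)$. For deterministic ud sequences, by contrast, the synchronized pairing in the definition of $\Lambda_k$ can in principle introduce spurious correlations that destroy the desired decoupling. Veech's criterion is exactly what neutralizes this danger: it guarantees that the two marginal ud properties — together with a sufficiently regular section — already suffice. A secondary technical point is the regularity of $s$: the map $x\mapsto\rho(s(x))$ must be continuous off a Haar-negligible set so that ud of $(x_k)$ can be applied to it termwise, which is precisely why an explicit Householder-type section is preferred over an arbitrary measurable one.
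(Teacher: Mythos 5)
There is a genuine gap, and it sits exactly at the point you flag and then wave away. Your construction pairs the two sequences term by term, $\Lambda_k=s(x_k)\tilde\Gamma_k$, but two individually ud sequences paired diagonally need not be ud in the product space: already for $n=2$, take $\phi_k$ ud in $[0,2\pi)$ and set $t_k=+1$ exactly when $\phi_k\in[0,\pi)$; then $(\phi_k)$ and $(t_k)$ are each ud, yet the pairs concentrate on half of $\Ss^1\times O(1)$ and the resulting matrices are not ud in $O(2)$. Your appeal to Veech to "neutralize" this is based on a mischaracterization of his theorem: Veech does not glue uniform distribution on a subgroup and on the quotient into uniform distribution on the group. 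What he proves is that if a sequence $(z_m)$ in $G$ is \emph{not contained in any proper closed subgroup}, then the partial products $w_m=z_{r_1}z_{r_2}\cdots z_{r_m}$ along a uniformly distributed sequence generator $(r_m)$ (built from a normal number) are ud in $G$. So the theorem takes a much weaker hypothesis than joint uniform distribution, but delivers its conclusion only for the derived product sequence, which your proposal never forms.

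The paper's proof supplies three ingredients that are missing from yours. First, instead of the diagonal pairing it uses the convolution $(x_i)\ast(h_j)$, which enumerates \emph{all} pairs $(x_i,h_j)$ and thereby removes the correlation problem at the level of the product space. Second, it proves (Lemma \ref{lem3}) that the image of this sequence under $T^{-1}(x,h)=\phi(x)h$ is not contained in any proper closed subgroup of $O(n)$; this step requires knowing that $O(n-1)$ is a maximal closed subgroup of $O(n)$ (Montgomery--Samelson), an input absent from your argument. Third, it applies Veech's theorem to that sequence, so the final ud sequence consists of partial products $z_{r_1}\cdots z_{r_m}$, not of the individual elements $\phi(x_i)h_j$. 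Note also that even with the convolution one cannot conclude directly that $T^{-1}(u_m)$ is ud in $O(n)$, because $T^{-1}$ is not continuous; this is precisely why the detour through "not contained in a proper closed subgroup" plus Veech is needed, and why your Peter--Weyl sketch, which never resolves how $\frac1N\sum_k\rho(s(x_k))\rho(\tilde\Gamma_k)$ decouples into separate averages, cannot be completed as written.
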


\begin{corollary} \label{thm2}
Given a ud sequence in $O(n)$, there exists an explicitly constructible ud sequence on $G(n,k)$, for every $k$ with $1\leq k \leq n-1$.
\end{corollary}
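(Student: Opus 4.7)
The plan is to exploit the standard identification of the Grassmannian as a homogeneous space, namely $G(n,k) \cong O(n)/(O(k)\times O(n-k))$, and simply push the given ud sequence in $O(n)$ down to $G(n,k)$ via the canonical projection. The general principle in the background is that continuous surjections map ud sequences to ud sequences with respect to the pushforward measure, and that the pushforward of Haar measure along the quotient map of a compact group by a closed subgroup is the unique invariant probability measure on the quotient.

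Concretely, I would fix the reference $k$-plane $V_0 = \mathrm{span}(e_1,\ldots,e_k) \in G(n,k)$ and define $\pi \colon O(n) \to G(n,k)$ by $\pi(\Gamma) := \Gamma V_0$, i.e.\ the span of the first $k$ columns of $\Gamma$. Given the ud sequence $(\Gamma_m)_{m \geq 1}$ in $O(n)$ provided by Theorem \ref{thm1}, the candidate sequence on the Grassmannian is $V_m := \pi(\Gamma_m)$, which is explicitly constructible as soon as $(\Gamma_m)$ is. The map $\pi$ is continuous and $O(n)$-equivariant with the stabiliser of $V_0$ equal to the closed subgroup $H := O(k) \times O(n-k)$.

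To verify uniform distribution on $G(n,k)$ with respect to its invariant probability measure $\mu_{G(n,k)}$, I would use the criterion that a sequence is ud iff integrals of continuous test functions converge to the expected integral. For any continuous $f \colon G(n,k) \to \RR$, the composition $f \circ \pi$ is continuous on $O(n)$, so the ud property of $(\Gamma_m)$ yields
$$\frac{1}{N} \sum_{m=1}^N f(V_m) = \frac{1}{N} \sum_{m=1}^N (f \circ \pi)(\Gamma_m) \xrightarrow[N \to \infty]{} \int_{O(n)} (f \circ \pi)\, d\mu_{O(n)}.$$
It then remains to identify this limit with $\int_{G(n,k)} f\, d\mu_{G(n,k)}$, which follows from the well-known fact that $\pi_* \mu_{O(n)}$ is the unique $O(n)$-invariant Borel probability measure on $O(n)/H$, and this is precisely the measure against which uniform distribution on $G(n,k)$ is defined.

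The main obstacle is conceptual rather than technical: one has to make sure that the ``uniform'' measure on $G(n,k)$ used in the integral-geometric applications later in the paper really coincides with the pushforward of Haar measure on $O(n)$ under $\pi$, so that the convergence above is the correct notion. Once this identification of measures is granted (which is standard for homogeneous spaces of compact groups), the proof reduces to the short Weyl-criterion computation displayed above, and the ``explicitly constructible'' clause is automatic since $V_m$ is built coordinate-wise from $\Gamma_m$.
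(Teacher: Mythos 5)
Your proposal is correct and follows essentially the same route as the paper: both push the ud sequence in $O(n)$ forward along the canonical projection $\Gamma \mapsto \Gamma L_k$ and use the fact that the invariant measure $\mu_k$ on $G(n,k)$ is by definition the image of Haar measure under this map. The only cosmetic difference is that you verify uniform distribution via the continuous-test-function criterion \eqref{ud2}, whereas the paper works directly with indicator functions of open sets with null boundary as in \eqref{ud}; these are equivalent by the preliminaries, so nothing substantive changes.
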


\paragraph{Outline. } In Section \ref{sec2} we recall the concept of uniform distribution in compact topological groups and the result of Veech. We prove Theorem \ref{thm1} in Section \ref{sec3} and apply it to certain integrals over the Grassmannian in Section \ref{sec4}. 
In Section \ref{sec5} we explicitly construct the sequence and present numerical results, before we conclude our paper in Section \ref{sec6}.


\section{Preliminaries}
\label{sec2}

In this section, we recall important defintions and concepts about compact topological groups. We refer to the books of Hewitt \& Ross \cite{HeRo79} and Kuipers \& Niederreiter \cite[Chapter 4]{KuNi74} for further background on compact groups and for a detailed exposition of the theory of uniform distribution in such groups.

\paragraph{Compact groups and homogenous spaces. }
A \emph{compact topological group $G$} is a Hausdorff topological space which is also a group such that the group operations \emph{product} and \emph{inverse} are continuous functions.
A \emph{proper closed subgroup} is a proper closed subset $H$ of group elements of $G$ which is a group itself.
There is a natural topology in the quotient space $G/H$ such that the natural map $g \mapsto gH$ of $G$ onto $G/H$ is open and continuous.

Let $X$ be a topological space. A group $G$ \emph{acts} on $X$ if there is a map $G \times X \rightarrow X$, such that $(gh)x=g(hx)$ and $ex=x$ for all $g,h \in G$, $x \in X$ and for the identity element $e \in G$.
Some elements of a group acting on a space $X$ may fix a point. These group elements form a closed subgroup called the \emph{isotropy group}, 
defined by $G_x = \{ g \in G : gx=x \}$, $x \in X$.
A group action $G \times X \rightarrow X$ is \emph{transitive} if for every pair of elements $x, y \in X$ there is a group element such that $gx=y$.
Given a compact topological group $G$, a \emph{$G$-space} or \emph{homogenous space} is a space $X$ on which $G$ acts transitively. The space $X$ is then isomorphic to the left cosets of the isotropy group, $X \cong G/G_x$.
In particular, every compact topological group is a homogenous space and products of homogenous spaces are again homogenous.

There exists a unique non-negative regular normed Borel measure $\mu$ on a compact topological group $G$ which is left translation invariant; that is $\mu(gB)=\mu(B)$ for all $g \in G$ and all Borel sets $B \in \mathcal{B}(G)$. This measure is called the normed \emph{Haar measure} on $G$ with normalization $\mu(G)=1$. Because of the compactness of $G$ this measure is also right translation invariant and thus we call it \emph{invariant}.
Given a homogenous space $X$, there is a unique $G$-invariant Borel measure, $\rho$, on $X$ defined by $\rho(B) = \mu (\{g \in G : g x_0 \in B \})$, $B \in \mathcal{B}(X)$ with arbitrary, but fixed, $x_0 \in X$; see \cite[Theorem 13.1.5]{SchWei08}.

\paragraph{Uniform distribution. }
A sequence $(w_m)$ in a compact topological group $G$ is said to be \emph{uniformly distributed (ud) with respect to the Haar measure} in $G$ if whenever $U$ is an open set of $G$ whose boundary has measure 0, and $\mathbf{1}_U$ is the characteristic function of $U$, the equation
\begin{equation} \label{ud}
\underset{N \rightarrow \infty} \lim \frac{1}{N} \sum_{m=1}^N \mathbf{1}_U(w_m) = \mu(U)
\end{equation}
holds.
This definition is extended to a homogenous space $X$ if the Haar measure $\mu$ is replaced by the unique $G$-invariant Borel measure $\rho$ on $X$.
Importantly, it can be shown that the sequence $(w_m)$ is ud in $G$ (resp. $X$) if and only if 
\begin{equation} \label{ud2}
\underset{N \rightarrow \infty} \lim \frac{1}{N} \sum_{m=1}^N f(w_m) = \int_G f \,\, d\mu,
\end{equation}
holds for all complex-valued, continuous functions $f$ on $G$ (resp. $X$).

\paragraph{The method of Veech. }
We construct a uniformly distributed sequence via a theorem of Veech using normal numbers. Before recalling this theorem, we remark that Drmota, Morgenbesser \cite{DrMo10} recently presented a different construction method based on \emph{generalized Thue-Morse sequences}.

Veech \cite{Vee71} calls a sequence $(r_m)$ of positive integers \emph{uniformly distributed sequence generator (udsg)} if whenever $G$ is a compact group and $(z_m)$ a sequence in $G$ which is not contained in any proper closed subgroup, the \emph{generated} sequence $(w_m)$ with $w_m=z_{r_1} z_{r_2}\ldots z_{r_m}$ is uniformly distributed in $G$. In his remarkable paper, Veech not only shows that such sequence generators exist but gives also explicit constructions.

Fix an integer $b>1$, and let all real numbers $\alpha$, $0<\alpha<1$, be represented by their (unique) expansions to the base $b$, that is $\alpha=0.a_1 a_2 a_3 \ldots$, where the \emph{digits} $a_i$ are integers with $0\leq a_i <b$ for $i\geq 1$, and also $a_i < b-1$ for infinitely many $i$. 
Let $J=[\beta_1,\beta_2)$ be a subinterval of $[0,1)$. If $\alpha$ is $b$-normal (for a definition see \cite{KuNi74}), there exist infinitely many integers $q\geq 2$ such that $\alpha_q \in J$, where $\alpha_q=0.a_q a_{q+1} \ldots$. Veech arranges these integers in increasing order, forming a sequence $(q_m)$. Now let $(r_m)$ be the sequence of differences, that is $r_1=q_1 -1$, $r_2=q_2-q_1$, $\ldots$, then Veech proves
\begin{theorem}[Veech, \cite{Vee71}] \label{thm3}
The sequence $(r_m)$ is a uniformly distributed sequence generator if $\alpha$ is $b$-normal and if $J \subseteq [0,1]$ is an interval of length at least $1/b$.
\end{theorem}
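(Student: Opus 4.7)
My plan is to verify the Weyl-type criterion obtained from the Peter--Weyl theorem together with \eqref{ud2}: the sequence $(w_m)$ is ud in $G$ if and only if, for every non-trivial irreducible unitary representation $\pi\colon G\to U(d_\pi)$, one has $S_N(\pi):=\frac{1}{N}\sum_{m=1}^N \pi(w_m) \to 0$ in operator norm. Fix such a $\pi$ and set $U_j:=\pi(z_j)$, so that $\pi(w_m)=U_{r_1}U_{r_2}\cdots U_{r_m}$. The hypothesis that $(z_m)$ lies in no proper closed subgroup translates into $\overline{\langle U_j : j\ge 1\rangle}=\pi(G)$, and this image acts non-trivially and irreducibly on $\mathbb{C}^{d_\pi}$; this is the algebraic lever that will force the required cancellation.

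Next, I would extract information on the distribution of $(r_m)$ from $b$-normality. Under the shift $T\colon x\mapsto bx \bmod 1$, the orbit $(T^q\alpha)_{q\ge 0}$ equidistributes on $[0,1)$; the $(q_m)$ are precisely the return times of this orbit to $J$, and $(r_m)$ is the corresponding gap sequence. By a Kac-type return argument the mean gap equals $1/|J|$, and $b$-normality refines this to the statement that every finite pattern of consecutive digits, and hence every finite pattern of consecutive gaps, appears with its expected asymptotic frequency. The hypothesis $|J|\ge 1/b$ enters in a crucial way here: it prevents $J$ from being so small that $(r_m)$ is trapped in a sparse residue class or in an arithmetic progression modulo some integer, either of which would allow one to embed $(w_m)$ in a proper closed subgroup and destroy equidistribution.

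Finally, I would combine the two ingredients by analysing $S_N(\pi)$ as a Ces\`aro mean of products whose factors are governed by the gap distribution from the previous step. Grouping consecutive factors into long blocks $V_k := U_{r_k}\cdots U_{r_{k+\ell-1}}$ and exploiting the irreducibility of $\pi$, one shows that any operator commuting with all sufficiently long blocks $V_k$ must be a scalar, which by non-triviality of $\pi$ is forced to vanish in the limit. The main obstacle will be precisely this last step: since $(w_m)$ is a deterministic non-commutative product rather than a genuine random walk on $G$, one cannot invoke standard limit theorems for products of i.i.d.\ random matrices. One must use the strong mixing supplied by $b$-normality to justify replacing probabilistic expectations by Ces\`aro averages in the representation space, and it is exactly here that the length hypothesis $|J|\ge 1/b$ plays a genuine role rather than a merely technical one.
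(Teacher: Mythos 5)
First, a point of reference: the paper does not prove Theorem~\ref{thm3} at all --- it is quoted from Veech \cite{Vee71} and used as a black box --- so there is no in-paper argument to compare yours against, and your attempt must stand on its own. Judged that way, it is an outline rather than a proof, and the gap sits exactly where you yourself flag it. Reducing uniform distribution to the matrix-valued Weyl criterion $S_N(\pi)=\frac1N\sum_{m=1}^N\pi(w_m)\to 0$ for nontrivial irreducible $\pi$ is standard, and so is the observation that $(q_m)$ is the sequence of return times of the $b$-shift orbit of $\alpha$ to $J$. But neither of these touches the actual difficulty, namely proving that the Ces\`aro averages of the deterministic non-commutative products $U_{r_1}\cdots U_{r_m}$ vanish. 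The lever you propose --- ``any operator commuting with all sufficiently long blocks $V_k$ must be a scalar'' --- does not connect to $S_N(\pi)$: that quantity is a sum of the partial products themselves, not an operator intertwining the blocks, and no mechanism is supplied for converting irreducibility of $\pi(G)$ plus equidistribution of gap patterns into the required cancellation. Since that cancellation \emph{is} the content of Veech's theorem, the proposal as written assumes what is to be proved.

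Second, your heuristic for the hypothesis $|J|\ge 1/b$ misidentifies its role. The danger is not that $(r_m)$ falls into an arithmetic progression; it is that some index $j$ may never occur as a gap, in which case $z_j$ never enters the products. Concretely, with $T(x)=\{bx\}$ one has $\alpha_q=T^{q-1}\alpha$, and for $b=10$, $J=[0.5,0.55)$ one checks that $J\cap T^{-1}(J)=\emptyset$, so the gap $r=1$ never occurs for any $\alpha$; choosing $z_1$ to be a topological generator of a nontrivial $G$ and $z_j=e$ for $j\neq 1$ then gives a sequence contained in no proper closed subgroup with $w_m\equiv e$, so such an $(r_m)$ is not a udsg. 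The hypothesis $|J|\ge 1/b$ guarantees that $J\cap T^{-1}(J)$ --- and, with more work, every finite return-time pattern --- has positive Lebesgue measure, whence by normality every $j$ occurs as a gap with positive frequency. A correct proof must make this quantitative and then still supply the missing cancellation argument for the non-commutative Ces\`aro means; I would encourage you either to work through Veech's original argument or to restrict attention to the abelian case first, where \eqref{ud2} reduces the problem to one-dimensional characters and the structure of the difficulty becomes visible.
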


As an example we mention \emph{Champernowne's number} obtained by concatenating the decimal representations of the natural numbers, that is
$$\alpha=0.123456789101112 \ldots.$$ 
This number is normal in base $10$. Now, let $(q_m)$ be the sequence of successive occurences of a $5$ in $\alpha$ (such that $q_1=5$, $q_2=21$, $\ldots$), then $r_1=q_1-1=4$, $r_2=q_2-q_1=16$, $\ldots$ defines a udsg; see \cite{lev99} for more involved constructions of normal numbers.


\section{Proof of Theorem \ref{thm1} }
\label{sec3}

The proof of Theorem \ref{thm1} is based on two lemmas which require additional definitions. Throughout this section all topological spaces are considered to be second countable.
Given two homogenous spaces $X$ and $Y$ with corresponding Borel measures $\rho_X$ and $\rho_Y$, we define the product space $X \times Y$ and the product measure $\rho_X \times \rho_Y$ in the usual way. The direct products of the open sets of $X$ and $Y$ form a basis of the product topology. Moreover, (for products of second countable spaces) the product $\sigma$-algebra is the Borel $\sigma$-algebra of the product topology, on which the product measure is induced by
$$(\rho_X \times \rho_Y) (B) = (\rho_X \times \rho_Y) (B_X \times B_Y) = \rho_X(B_X) \rho_Y(B_Y),$$
for every basis element $B=B_X \times B_Y$.

The first lemma shows how to bijectively map elements of a certain product of homogenous spaces to a related compact group. 

\begin{lemma} \label{lem2}
Let $G$ be a compact topological group, let $H \subset G$ be a proper closed subgroup and let $X=G/H$ be the space of cosets. Then there exists a bimeasurable, bijective map $T: G \rightarrow X \times H$. Moreover, the Haar measure $\mu$ on $G$ admits the decomposition $T(\mu)=\rho_X \times \rho_H$.
\end{lemma}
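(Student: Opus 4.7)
The plan is to build $T$ from a measurable cross-section of the quotient map $\pi: G \to X = G/H$. Since $G$ is a second countable, Hausdorff, compact group (so Polish) and $H$ is closed, the multifunction $x \mapsto \pi^{-1}(x)$ takes values in the nonempty closed subsets of the Polish space $G$, so by a measurable selection theorem (Kuratowski--Ryll-Nardzewski, or its specialization by Federer--Morse for quotients of Polish groups) there is a Borel measurable section $s: X \to G$ satisfying $\pi \circ s = \mathrm{id}_X$. This step is the main obstacle: everything else reduces to a bookkeeping computation once a Borel section is in hand.

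Given such a section, I would define
$$T(g) = \bigl(\pi(g),\; s(\pi(g))^{-1}\, g\bigr),$$
noting that $s(\pi(g))$ and $g$ lie in the same $H$-coset, so the second coordinate is indeed in $H$. The map $T$ is a bijection with explicit inverse $T^{-1}(x,h) = s(x)\, h$ (a direct check: applying $T^{-1}$ to $T(g)$ recovers $g$, and the other composition reduces using $\pi(s(x)h) = x$). Both $T$ and $T^{-1}$ are compositions of the continuous group operations with the Borel maps $\pi$ and $s$, hence bimeasurable.

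For the measure decomposition, I would first establish the classical integration formula
$$\int_G f\, d\mu \;=\; \int_X \int_H f\bigl(s(x)\, h\bigr)\, d\rho_H(h)\, d\rho_X(x)$$
for continuous $f: G \to \mathbb{C}$. To prove this, denote the right-hand side by $\nu(f)$; then $\nu$ is a probability measure on $G$, and it suffices by uniqueness of the Haar measure to show $\nu$ is left $G$-invariant. For $g \in G$, writing $g\, s(x) = s(g\cdot x)\, h_0(g,x)$ with $h_0(g,x) = s(g\cdot x)^{-1} g\, s(x) \in H$, left-invariance of $\rho_H$ absorbs $h_0(g,x)$, and left-invariance of $\rho_X$ (from its definition as the pushforward of $\mu$) absorbs the change of variable $x \mapsto g\cdot x$; this yields $\nu(L_g f) = \nu(f)$.

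Applying the formula to $f \circ T$ in place of $f$, and using $T(s(x)\, h) = (x,h)$, gives
$$\int_G (f\circ T)\, d\mu \;=\; \int_X \int_H f(x,h)\, d\rho_H(h)\, d\rho_X(x) \;=\; \int_{X\times H} f\, d(\rho_X \times \rho_H),$$
so by definition of pushforward $T(\mu) = \rho_X \times \rho_H$, completing the proof. The only non-elementary ingredient is the existence of the Borel section; the rest is a clean application of Fubini together with translation invariance.
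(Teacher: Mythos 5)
Your proof is correct and takes essentially the same route as the paper: you construct the identical map $T(g) = \bigl(\pi(g),\, s(\pi(g))^{-1}g\bigr)$ from a Borel cross-section of $\pi$ (the paper invokes Bondar's theorem rather than Kuratowski--Ryll-Nardzewski/Federer--Morse, but this is the same ingredient), with the same explicit inverse $T^{-1}(x,h)=s(x)h$. The only difference is that you spell out the uniqueness-of-Haar-measure argument for $T(\mu)=\rho_X\times\rho_H$, which the paper's sketch delegates to Lemma 4.1 of Diaconis--Shahshahani.
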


\begin{proof}[Sketch of proof]
We follow \cite{Dia87} in the definition of the map $T$. Let $\pi: G \rightarrow X$ be the map that assigns $g \in G$ to the coset containing $g$. To choose coset representatives, let $\phi: X \rightarrow G$ be a measurable inverse of $\pi$ (so $\pi \phi(x) = x$). The existence of $\phi$ under our assumptions follows from \cite[Theorem 1]{Bon76}. Define $T: G \rightarrow X \times H$ by
\begin{equation*}
T(g) = ( \pi(g), (\phi \pi (g) )^{-1} g).
\end{equation*}
This map is shown to be bimeasurable and bijective with inverse
\begin{equation*}
T^{-1}(x,h) = \phi(x) h.
\end{equation*}
Let $\mu$, $\rho_X$ and $\rho_H$ be invariant measures on $G$, $X$ and $H$ normalized so that each space has total mass 1. Then it follows from the definition of invariant measures and the product decomposition defined by $T$ that $T(\mu)=\rho_X \times \rho_H$; see \cite[Lemma 4.1]{Dia87}.
\end{proof}

The bijective map $T$ is not necessarily continuous and thus it seems in general difficult to see directly that it preserves the uniform distribution of a sequence in $X \times H$ which is mapped to $G$. However, the map $T$ can be used to obtain a sequence in $G$ that satisfies the assumptions of the Theorem of Veech as shown in the following lemma. Therefore we need one final definition.

Consider two sequences $(x_m)$ and $(y_m)$ in the homogenous spaces $X$ and $Y$. We construct a sequence $(u_m)$ in $X \times Y$ by combining the sequences $((x_m, e_Y))$ and $((e_X,y_m))$, with $e_X, e_Y$ being the neutral elements in $X$ and $Y$, in such a way that its first $k^2$ elements are just all possible pairs of $(x_i, y_j)$ with $1 \leq i \leq k$ and $1\leq j \leq k$. Specifically, we define $u_m$ by taking the unique integer $k\geq 1$ with $(k-1)^2 < m \leq k^2$, and setting $u_m = (x_k, y_i)$ if $m=(k-1)^2 + 2i -1$, and $u_m=(x_i, y_k)$ if $m=(k-1)^2 + 2i$. Thus, the first terms of the sequence $(u_m)$ are 
$$(x_1, y_1), (x_2, y_1), (x_1, y_2), (x_2, y_2), (x_3, y_1), (x_1, y_3), (x_3, y_2), (x_2, y_3), (x_3, y_3), \ldots$$
The sequence $(u_m)$ is called the \emph{convolution} of the sequences $((x_m, e_Y))$ and $((e_X,y_m))$, and is denoted by $(x_m) \ast (y_m)$; see also \cite{KuNi74}. 

\begin{remark} \label{rem1}
It can be shown that this construction preserves uniform distribution. That is, the sequence $(u_m)=(x_i) \ast (y_j)$ is uniformly distributed in the homogenous space $X \times Y$ if $(x_i)$ is ud in $X$ and $(y_j)$ is ud in $Y$.
\end{remark}

\begin{lemma} \label{lem3}
Let $H$ be a proper closed subgroup of $G$, which is not contained in any other proper closed subgroup of $G$, with $X=G/H$. Let $(x_m)$ and $(h_m)$ be ud in $X$ resp. $H$, and let $(u_m)=(x_i) \ast (h_j)$ be a sequence in $X \times H$.
Then the sequence $(T^{-1}(u_m))$ is not contained in any proper closed subgroup of $G$.
\end{lemma}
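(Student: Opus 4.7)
The plan is to argue by contradiction: suppose there is a proper closed subgroup $K \subsetneq G$ containing every element of the sequence $(T^{-1}(u_m))$, and derive that $K$ must in fact equal $G$, using the maximality hypothesis on $H$ at the very end. The starting observation is that, by construction of the convolution $(u_m) = (x_i) \ast (h_j)$, the set of values $\{T^{-1}(u_m) : m \geq 1\}$ is exactly $\{\phi(x_i)\,h_j : i,j \geq 1\}$, since $T^{-1}(x,h) = \phi(x)\,h$ by Lemma \ref{lem2}.

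First I would exploit the uniform distribution of $(h_j)$ inside $H$ to push $H$ into $K$. Since $(h_j)$ is ud in $H$, it is dense in $H$ (every non-empty open set in $H$ has positive Haar measure on a compact group, so ud forces density). Fixing any index $i$, the inclusion $\phi(x_i)\,h_j \in K$ for all $j$ together with the continuity of left translation by $\phi(x_i)$ and the closedness of $K$ gives $\phi(x_i) H \subseteq K$. Taking $h_j \to e$ along a density argument (or simply noting $e$ lies in the closure of $\{h_j\}$) yields $\phi(x_i) \in K$; since $K$ is a subgroup, then $H = \phi(x_i)^{-1}\phi(x_i) H \subseteq \phi(x_i)^{-1} K = K$. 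Thus $H \subseteq K$.

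Next I would produce an element of $K$ outside $H$. Because $H$ is a proper closed subgroup of $G$, the coset space $X = G/H$ contains more than the single point $e_X = \pi(e)$, and since $X$ is Hausdorff the complement $X \setminus \{e_X\}$ is a non-empty open set of positive $G$-invariant measure. Uniform distribution of $(x_i)$ in $X$ therefore forces $x_i \neq e_X$ for some (indeed infinitely many) $i$. For such an $i$, $\pi(\phi(x_i)) = x_i \neq e_X$ implies $\phi(x_i) \notin H$, while the previous paragraph gives $\phi(x_i) \in K$. Hence $H \subsetneq K$.

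Finally, the hypothesis that $H$ is not contained in any proper closed subgroup of $G$ other than itself forces $K = G$, contradicting the assumption that $K$ is proper. The only step I anticipate needing slight care with is the passage $\phi(x_i) \in K$ from $\phi(x_i)\,h_j \in K$ for all $j$, because $\phi$ is only measurable and not continuous, so one cannot invoke continuity of $\phi$; the argument must instead use the density of $\{h_j\}$ in $H$ (together with closedness of $K$ and continuity of multiplication on $G$), which is why density, not merely equidistribution, is the input that is actually used.
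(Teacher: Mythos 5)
Your proof is correct, but it takes a genuinely different route from the paper's. The paper argues directly: for each proper closed subgroup $F$ it exhibits a convolution element whose image under $T^{-1}$ escapes $F$, splitting into the case $F=H$ (handled via the ud of $(x_m)$ in $X$) and the case $H \not\subset F$, where it forms $H'=H\cap F$, passes to the quotient $H/H'$, and uses the ud of $(h_m)$ to find an $\tilde h$ outside the coset $h_jH'$ of bad elements. You instead argue by contradiction with a single hypothetical subgroup $K$ containing the whole sequence: density of $(h_j)$ in $H$ (a consequence of ud, since non-empty open sets in a compact group have positive Haar measure), together with closedness of $K$ and continuity of left translation, gives $\phi(x_i)H\subseteq K$ and hence $H\subseteq K$; ud of $(x_i)$ in $X$ supplies some $x_i\neq e_X$, so $\phi(x_i)\in K\setminus H$ and $H\subsetneq K$; the maximality of $H$ then finishes in one stroke. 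Your route avoids the coset/quotient bookkeeping of the paper's second case entirely and uses the maximality hypothesis only once at the very end, at the price of invoking full density of $(h_j)$ rather than just its visits to one well-chosen open set of positive measure; on a compact group these are equivalent, so nothing is lost. Your explicit care in never applying $\phi$ to a limit --- using density of $\{h_j\}$ and continuity of multiplication in $G$ instead --- is exactly the right way to sidestep the discontinuity of $\phi$, which the paper likewise must, and does, avoid.
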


\begin{proof}
Our goal is to show that for every proper closed subgroup of $G$ there exists an element $u_m$ such that $T^{-1}(u_m)$ is not contained in this subgroup.

First, choose an arbitrary element $u_m=(x_i,h_j)$. If $T^{-1}(u_m)$ is contained in $H$, we know that $\phi(x_i)$ is in $H$ and, therefore, since $\pi \phi (x_i) = x_i$, we get that $x_i=eH \in X$ with $e$ being the neutral element. 
The left cosets $gH$ of the compact subgroup $H$ partition $G$ and are in bijection (via left multiplication) with each other, thus having the same measure. 
Hence, for every coset $gH$ in $X$ we can choose an open set $A \subset X$ with $\rho_X(A) > 0$ that does not contain $gH$; this is especially true for $g=e$. Since $(x_m)$ is ud in $X$, $ \underset{N\rightarrow \infty} \lim 1/N \sum_{m=1}^{N} \mathbf{1}_U(x_m) = \rho_X(U),$
holds for all open subsets $U$ of $X$ and therefore also for $A$. Since $\rho_X(A) >0$, there exists an element $x_{\tilde{m}}=\tilde{x}$ in $A$. By modifying $u_m$ to $(\tilde{x},h_j)$ we obtain an element that is not mapped to $H$ and exists because of the definition of convolution.

Second, we fix an arbitrary proper closed subgroup $F$ with $H \not \subset F$ and choose again an arbitrary element $u_m=(x_i, h_j)$. If $T^{-1}(u_m)=\phi(x_i) h_j$ is in $F$, we know that $\phi(x)$ is in $F h_j^{-1}$ (which is a right coset of $F$). Let $H':=H \cap F$. This is a closed subgroup of $H$ such that the natural map $\alpha: H \rightarrow H / H'$ is continuous and open. 
For the coset $h_j H'$ in $H/H'$ we can again choose an open set in $H/H'$ with positive measure that does not contain $h_j H'$ and whose $\alpha$ preimage $B$ is open in $H$ with $\rho_H(B)>0$.
Since $(h_m)$ is ud in $H$, we know that $ \underset{N\rightarrow \infty} \lim 1/N \sum_{m=1}^{N} \mathbf{1}_U(h_m) = \rho_H(U),$
holds for all open subsets $U$ of $H$ and therefore also for $B$. This ensures the existence of an element $h_{\tilde{m}}=\tilde{h} \in B$, since $B$ has positive measure. Taking this $\tilde{h}$ we obtain the element $\tilde{u}=(x_i, \tilde{h})$ which exists again because of the definition of convolution. However, for this element we know that $T^{-1}(u)=\phi(x_i) \tilde{h}$ is not in $F$.
\end{proof}

To turn to our particular case we follow \cite{Dia87} and let $G=O(n)$ with $H=O(n-1)=\{ \Gamma \in O(n): \Gamma e_1 = e_1 \}$. Coset representatives for $O(n-1)$ in $O(n)$ can be specified by saying where $e_1$ goes. Thus, the coset space is identified with $X=\Ss^{n-1}\cong O(n)/O(n-1)$. Then $\pi(\Gamma) = \Gamma e_1$. 
Let $I$ denote the identity matrix and $v^t$ the transpose of $v$. The map
$$\phi(x)=\left\{
\begin{array}{ll}
I & \mbox{ if } x = e_1, \\
I - 2 v v^t / c, & \text{ if } x \neq e_1, \mbox{ with } v=-x + e_1, c=v^t v,
\end{array}\right.$$
is a measurable inverse of $\pi$ that is continuous except at $e_1$ (there is no continuous choice of coset representatives).
By \cite[Lemma 5]{MoSa43} the subgroup $O(n-1)$ is not contained in any other proper subgroup of $O(n)$. Thus, given ud sequences in $\Ss^{n-1}$ and $O(n-1)$, we apply Theorem \ref{thm3} to the sequence $T^{-1}(u_m)$ and obtain a uniformly distributed sequence in $O(n)$; see also Remark \ref{rem2}.
As for the base case $O(2)$, it suffices to pick uniformly distributed angles $\phi_m$ form the interval $[0,2\pi)$ together with ud elements $t_m$ from the set $\{-1,1\}$. Then
$$\left(\begin{matrix}
\cos(\phi_m) & \sin(\phi_m) \\
-t_m \sin(\phi_m) & t_m \cos(\phi_m)
\end{matrix} \right)$$
yields a uniformly distributed sequence in $O(2)$.


\section{Application to the Grassmannian}
\label{sec4}

In this section, we show a potential application of our ud sequence in $O(n)$. In the first paragraph we apply Theorem \ref{thm1} to obtain a ud sequence on the Grassmannian manifold $G(n,k)$. In the second paragraph, we apply this sequence to a concrete integral. We refer to the books of Schneider \cite{Sch14} and Schneider \& Weil \cite{SchWei08} for more details on convex and integral geometry.

\paragraph{Ud sequence on the Grassmannian. } 
It is well-known that the Grassmannian manifold $G(n,k) = O(n) / O(n-k) \times O(k)$ is a homogenous space on which the orthogonal group acts transitively.
The natural operation of $O(n)$ on $G(n,k)$ is given by $(\Gamma, L) \mapsto \Gamma L$, which is simply the image of $L$ under $\Gamma$. 
To get a topology on $G(n,k)$ the surjective (but not injective) function
\begin{equation*}
\beta_k: O(n) \rightarrow G(n,k), \,\,\,\,\, \Gamma \mapsto \Gamma L_k,
\end{equation*}
is introduced, in which $L_k$ is an arbitrary, but fixed element of $G(n,k)$.
Then $G(n,k)$ is endowed with the finest topology for which $\beta_k$ is continuous. 
Thus, the preimage $\beta_k^{-1}(A)$ of every open set $A \subseteq G(n,k)$ is open.
Moreover, as noted in Section \ref{sec2}, there is a unique Haar measure $\rho=\mu_k$ on $G(n,k)$, normalized by $\mu_k(G(n,k))=1$.
Letting $\mu$ be the measure on $O(n)$, $\mu_k$ is the \emph{image measure} of $\mu$ under the mapping $\beta_k$, which means
\begin{equation*} \label{imageM}
\mu_k (A) = \mu( \{ \Gamma \in O(n) : \Gamma L_k \in A\} ) = \mu(\beta_k^{-1}(A) ).
\end{equation*}
This naturally leads to a concrete version of the corollary of Theorem \ref{thm1}; see also \cite[Remark 3]{DrMo10}.
\begin{thm2}
Let $(x_m)$ be ud in $O(n)$. Then $(y_m):=(\beta_k(x_m))$ is ud in $G(n,k)$.
\end{thm2}
\begin{proof}
Note that if $\mu_k(\partial A)=0$, then $\mu(\partial \beta_k^{-1} (A))=0$.
Moreover, we observe that
\begin{equation*}
\frac{1}{N} \sum_{m=1}^N \mathbf{1}_A(y_m) - \mu_k(A) = \frac{1}{N} \sum_{m=1}^N \mathbf{1}_{\beta_k^{-1}(A)} (x_m) - \mu(\beta_k^{-1}(A)). 
\end{equation*}
Since $(x_m)$ is ud in $O(n)$ the right hand side converges to 0 as $N$ goes to $\infty$ for all open sets in $O(n)$ whose boundary has measure 0. It follows from the continuity of $\beta_k$ that the preimage of every open set $A$ in $G(n,k)$ is open in $O(n)$ and thus $(y_m)$ is ud in $G(n,k)$.
\end{proof}
Finally, to prepare for the next paragraph, we introduce $A(n,k)$ as the space of all $k$-dimensional affine subspaces of $\RR^n$, the \emph{affine Grassmannian}, on which there exists a unique motion invariant, normalized Haar measure $\nu_k$ with
$\nu_k(\{ E \in A(n,k) \mid E \cap \BB^n \neq \emptyset\}) = b_{n-k}$, with $b_{n-k}$ being the volume of the $(n-k)$-dimensional unit ball $\BB^{n-k}$.

\paragraph{Integral-geometric formulas. } 
Let $\mathcal{K} \subset \RR^n$ be a \emph{convex body}, that is a compact, convex set, and let $V_0, V_1, \ldots, V_n$ denote its intrinsic volumes, which are geometric functionals on the space, $\mathcal{K}^n$, of all compact bodies in $\RR^n$. This space can be made into a metric space using the Hausdorff metric. The \emph{volume}, $V_n$, the \emph{surface area}, $2 V_{n-1}$,
and the \emph{Euler characteristic}, $V_{0} = \chi$,
are often of special interest.
The intrinsic volumes can be characterized by their properties,
namely that they are additive, motion invariant, and continuous.
Their importance is underlined by \emph{Hadwiger's Characterization Theorem},
which states that any additive, motion invariant, and continuous function
on $\mathcal{K}^n$ is a linear combination of the intrinsic volumes; see \cite{Had51, Had52}.

The famous \emph{Crofton formula} provides integral representations for the intrinsic volumes of a convex body.
In the following, when integrating with respect to the Lebesgue measure in $\RR^n$ we simply write
$d y$. 
For our example, we use a special case of the classical Crofton Formula:
\begin{equation} \label{intr}
  V_{n-k} (\mathcal{K})  =  c_{k,n} \cdot
                     \int_{E \in A(n,k)} \chi(\mathcal{K} \cap E) \ d \nu_k
\end{equation}
for $0 \leq k \leq n-1$, where $\mathcal{K} \in \mathcal{K}^n$ is a convex body in $\RR^n$,
$\chi (\mathcal{K} \cap E)$ is the Euler characteristic of the intersection,
and $c_{k,n} = {n \choose k} \frac{b_n}{b_k b_{n-k}}$.
Using \cite[Theorem 13.2.12]{SchWei08}, we can rewrite \eqref{intr} and obtain
\begin{equation} \label{intr2}
V_{n-k} (\mathcal{K})  =c_{k,n} \cdot
                     \int_{L \in G(n,k)} \int_{y \in L^{\perp}}
                     \chi \left( \mathcal{K} \cap (L+y) \right) \ d y \ d \mu_k,
\end{equation}
in which $L^{\perp} \in G(n,n-k)$ denotes the (unique) orthogonal complement of $L \in G(n,k)$ and $L+y$ denotes a translate of $L$.
Now we observe that the inner integral is simply the $(n-k)$-dimensional volume of the orthogonal projection of $\mathcal{K}$ onto $L^{\perp}$, denoted as $\mathcal{K} | L^{\perp}$.
The projection $\mathcal{K} | L^{\perp}$ is convex and varies continuously with $L$. Moreover, the volume functional is continuous on $\mathcal{K}^{n-k}$ and hence also its restriction to the subset consisting of all projections $\mathcal{K} | L^{\perp}$ for $L \in G(n,k)$. Thus, setting $f(L) =vol( \mathcal{K} | L^{\perp} )$ and using our ud sequence $(y_m)$ on $G(n,k)$, we get via \eqref{ud2}
\begin{equation}\label{intr3}
\underset{N \rightarrow \infty} \lim \left | \frac{1}{N} \sum_{m=1}^N f(y_m) - \int_{L \in G(n,k)} f(L) \ d \mu_k \right | = 0.
\end{equation}

\begin{remark} \label{rem2}
It would, of course, be interesting to have a quantified version of the convergence. 
In this context, we refer to the recent paper \cite{EdPa14}, in which this question is answered for the special case of integrating over $G(3,2)$ and looking at solid tubes instead of convex bodies. However, extending these results to general integrals over $G(n,k)$ poses intricate geometrical problems.
\end{remark}


\section{Implementation and numerical results}
\label{sec5}

This section contains all details needed to implement and test our sequence. We describe how to generate ud points on the sphere, outline our construction and show numerical results.

\paragraph{Ud sequences on the sphere. }
Distributing points on a hypersphere is a well studied problem. We refer to the classical paper of Pommerenke \cite{Pom59} for a construction of an infinite sequence and to Grabner, Klinger, Tichy \cite{GKT} for a quantitative analysis of various constructions and their use in numerical integration. 
To make our construction concrete, we recall Hlawka's appendix \cite{Hla82} to obtain a ud sequence on the sphere given a ud sequence in $[0,1]^n$; see \cite{DP10} for different constructions of ud sequences in $[0,1]^n$.
First, let $n=2k$ and let $(\alpha_m)$ be a ud sequence in $[0,1]^{2k}$. We write its $m$-th element as a row vector $(p_1(\alpha_m), q_1(\alpha_m), \ldots, p_k(\alpha_m), q_k(\alpha_m))$ and use the Box-Muller transform \cite{BM58}, to obtain a vector $(\xi_1, \eta_1, \ldots, \xi_k, \eta_k) \in \RR^{2k}$ with
\begin{equation*}
\xi_i = \sqrt{- \log p_i(\alpha_m)} \cos 2 \pi q_i(\alpha_m), \,\,\,\, \eta_i= \sqrt{- \log p_i(\alpha_m)} \sin 2 \pi q_i(\alpha_m).
\end{equation*}
In a next step, this vector is normalized to
\begin{equation*}
\Phi(\alpha_m):= \left( \frac{\xi_1}{r}, \frac{\eta_1}{r}, \ldots , \frac{\xi_k}{r}, \frac{\eta_k}{r} \right),
\end{equation*}
with $r^2=\xi_1^2 + \eta_1^2 + \ldots + \xi_k^2 + \eta_k^2$, yielding a point on the sphere $\Ss^{n-1}$, such that the sequence $(\Phi(\alpha_m) )$ is uniformly distributed on $\Ss^{n-1}$.
Concerning odd dimensions, we can simply omit $\xi_1$ in the above construction and obtain a ud sequence on $\Ss^{n-2}$ in a similar fashion.

\paragraph{Constructing a sequence. }
Applying our theorem, it is enough to know how to obtain uniformly distributed sequences on $\Ss^{i-1}$, $i=1, \ldots, n$, to obtain a ud sequence in $O(n)$. 
Having sequences $(x_m)$ on $\Ss^{n-1}$ and $(y_m)$ in $O(n-1)$, we immediately obtain a sequence in $O(n)$ in 3 steps:
\begin{itemize}
\item[(1)] Form the convolution $(u_m)=(x_m) \ast (y_m)$ in $\Ss^{n-1} \times O(n-1)$.
\item[(2)] Map $(u_m)$ via $T^{-1}$ to $O(n)$.
\item[(3)] Use Champernowne's number as a uniformly distributed sequence generator to modify the sequence $T^{-1}(u_m)$.
\end{itemize}
Using the subgroup algorithm to generate a random element in $O(n)$ is an $\mathcal{O}(n^3)$ algorithm; for details see \cite{Dia87, Ste80}.
(Note that $n$ is just the size of the matrices and is independent of the number of generated points!) 
Our quasi-random approach requires an additional matrix multiplication in the last of the above steps and thus the complexity of our algorithm is $\mathcal{O}(n^4)$. 
However, since $n$ is in general fixed and rather small this does not make any significant difference in practice.

\begin{remark} \label{rem3}
From a practical point of view it is interesting to note that the map $\phi(x)$ is almost continuous in our particular case. One can therefore safely omit the third of the above steps and still obtain a quasi-random sequence with good uniform distribution properties as long as the convolution of the two sequences is ud in the product space; see Remark \ref{rem1}.
\end{remark}

\paragraph{Numerical results}
To test our ud sequences $(y_m)$ we approximate different Crofton formulas via \eqref{intr3}. More precisely, for a given $n$-dimensional convex body $\mathcal{K}$, and a fixed $1\leq k \leq n$ we evaluate the function $f(L) =vol( \mathcal{K} | L^{\perp} )$, $L \in G(n,k)$, $N$-times and compute its mean 
$$I_{n,k}^N=\frac{1}{N} \sum_{m=1}^N f(y_m).$$
We implemented three different versions of this approximation. First, we computed random elements on $G(n,k)$, then we approximated the integral with our sequences following the above three steps and finally we computed quasi-random elements according to Remark \ref{rem2}.
We tested the three different implementations on various multi-dimensional convex polytopes and summarize our results for the first two algorithms in Table \ref{table1}, Table \ref{table2} and Figure \ref{fig1}. We note that the third implementation behaves always similar to the random approximation.
To generate the quasi-random elements we used \emph{scrambled Halton sequences}; see \cite{DP10,Fau09} for definitions and good choices of parameters.
Our test polytopes are as follows. In $\RR^3$, we use the unit cube (3-cube), the standard simplex (3-simplex) and Kirkman's icosahedron (K-icosahedron). The last polytope is given as the convex hull of 
$$(\pm 9, \pm 6, \pm 6), (\pm 12, \pm 4, 0), (0, \pm 12, \pm 8), (\pm 6, 0, \pm 12),$$
see \cite{Fe12} for more information about this interesting polytope.
In $\RR^4$, we used again the unit cube (4-cube) and the standard simplex (4-simplex). 
Furthermore, we construced two random polytopes by sampling 50 random points on $\Ss^2$ resp. $\Ss^3$ and taking their convex hulls.

\begin{table}[h!] 
\centering
{\small
\begin{tabular}{ l | c |  c | c c c}
polytope &  algo & $\# ver$ & $N=10$ & 100 & 1000  \\ 
\hline  
3-simplex			& 	r	& 4		& (0.547,\ 1.262)	& (0.592, 1.127) & (0.598,\ 1.107) \\ [0.1cm] 
					& 	qr	& 4		& (0.614,\ 1.140)	& (0.596,\ 1.134) & (0.597,\ 1.109) \\ [0.1cm] 
3-cube				& 	r	& 8		& (1.551,\ 1.450)	& (1.508,\ 1.516) & (1.519,\ 1.508) \\ [0.1cm] 
					& 	qr	& 8		& (1.473,\ 1.590)	& (1.532,\ 1.523) & (1.513,\ 1.506) \\ [0.1cm] 
K-icosahedron	& 	r	& 20	& (445.05,\ 25.45)	& (454.89,\ 24.89) & (454.29,\ 24.97) \\ [0.1cm] 
					& 	qr	& 20	& (459.92,\ 24.82)	& (455.20,\ 24.92) & (456.00,\ 25.01) \\ [0.1cm] 
r-polytope			& 	r	& 50	& (2.760, \ 1.937)	& (2.798, \ 1.910) & (2.785, \ 1.918) \\ [0.1cm] 
					& 	qr	& 50	& (2.801,\ 1.938)	& (2.768, \ 1.921) & (2.790, \ 1.921) \\ [0.1cm] 
					& 	r	& 150	& (3.009, \ 1.982)	& (3.018, \ 1.977) & (3.020, \ 1.975) \\ [0.1cm] 
					& 	qr	& 150	& (3.018,\ 1.975)	& (3.022, \ 1.975) & (3.020, \ 1.974) \\ [0.1cm]
\end{tabular}
}
\vspace{10pt}
\caption{Comparison of random and quasi-random approximation in $\RR^3$. The values in brackets show $I_{3,1}^N$ resp. $I_{3,2}^N$.} \label{table1}
\end{table}
\vspace{-5pt}
To explain the values we obtain, we recall the intrinsic volumes of the unit cube in $\RR^3$. The surface area, which is $2\cdot V_2$, of the 3-cube is 6. By \eqref{intr}, $2\cdot V_2(\text{3-cube}) = 4 \int_{L \in G(3,1)} f(L) \ d \mu_1$. Thus, we expect our algorithms to converge to $1.5$. Similarly, the integrated mean curvature, obtained as $\pi \cdot V_1$, of the 3-cube is $3 \pi$, and thus we again expect a value of $1.5$. Similar considerations allow to check the other values as well. In particular, note that the random polytopes approximate the corresponding spheres as the number of vertices increases. Since the surface area of $\Ss^2$ is $4\pi$ we expect $I_{3,1}^N$ to approximate $\pi=3.14\ldots$ from below, which can indeed be seen from our results.

\begin{table}[H] 
\centering
{\small
\begin{tabular}{ l | c |  c | c c c c }
polytope &  algo & $\# ver$ & $N=10$ & 100 & 1000 & 10000 \\ 
\hline  
4-simplex		& 	r	& 5		& 1.168	&  1.092 &  1.127 & 1.131\\ [0.05cm] 
				& 	qr	& 5		& 1.195	&  1.140 &  1.112 & 1.124\\ [0.05cm] 
4-cube			& 	r	& 16	& 1.657	& 1.682 & 1.676 & 1.672 \\ [0.05cm]
				& 	qr	& 16	& 1.732	& 1.665 & 1.664 & 1.665 \\ [0.05cm]
r-polytope		& 	r	& 50	& 1.826	& 1.826 & 1.818 & 1.820\\ [0.05cm]
				& 	qr	& 50	& 1.799	& 1.820 & 1.822 & 1.818 \\[0.05cm]
\end{tabular}
}
\vspace{10pt}
\caption{Comparison of random and quasi-random approximation in $\RR^4$. The values show $I_{4,3}^N$.} \label{table2}
\end{table}
\vspace{-5pt}

\begin{figure}[H]
\subfigure{\includegraphics[width=0.49\textwidth]{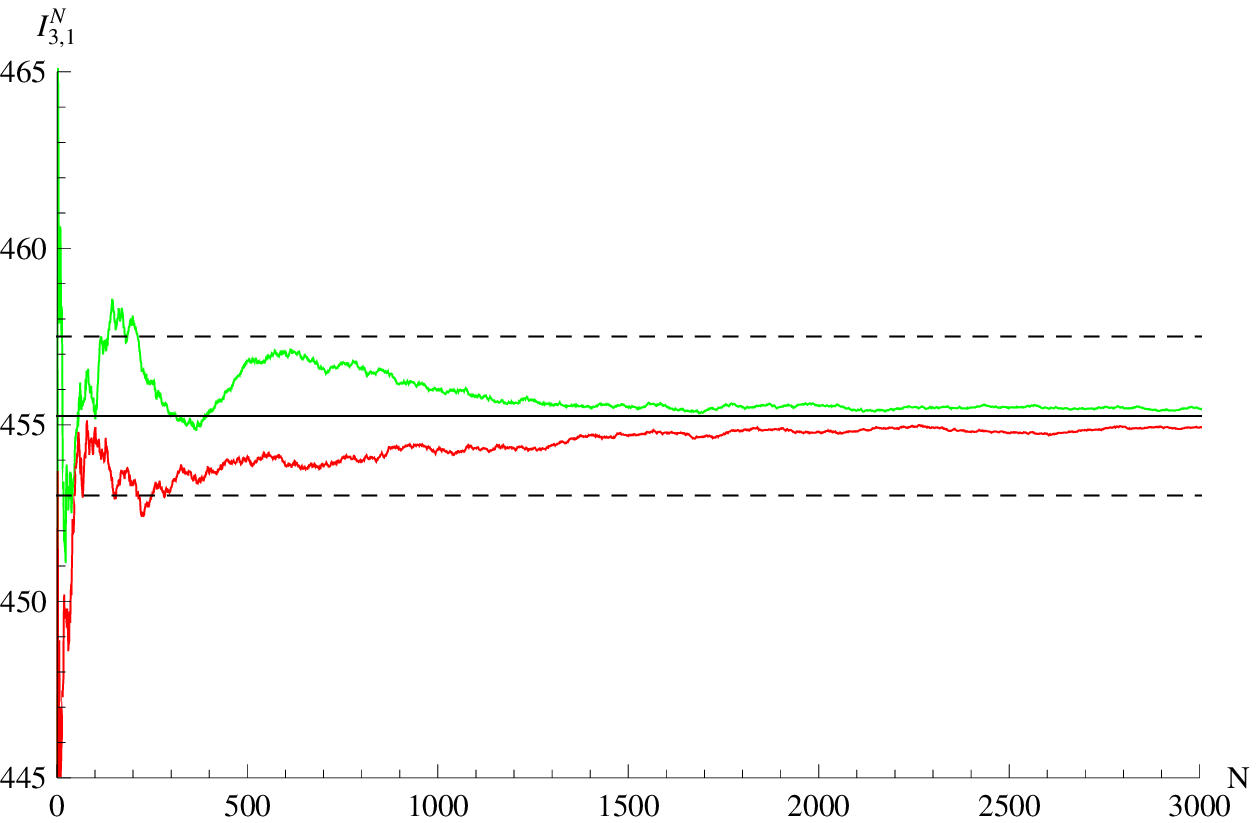}}\hfill
\subfigure{\includegraphics[width=0.49\textwidth]{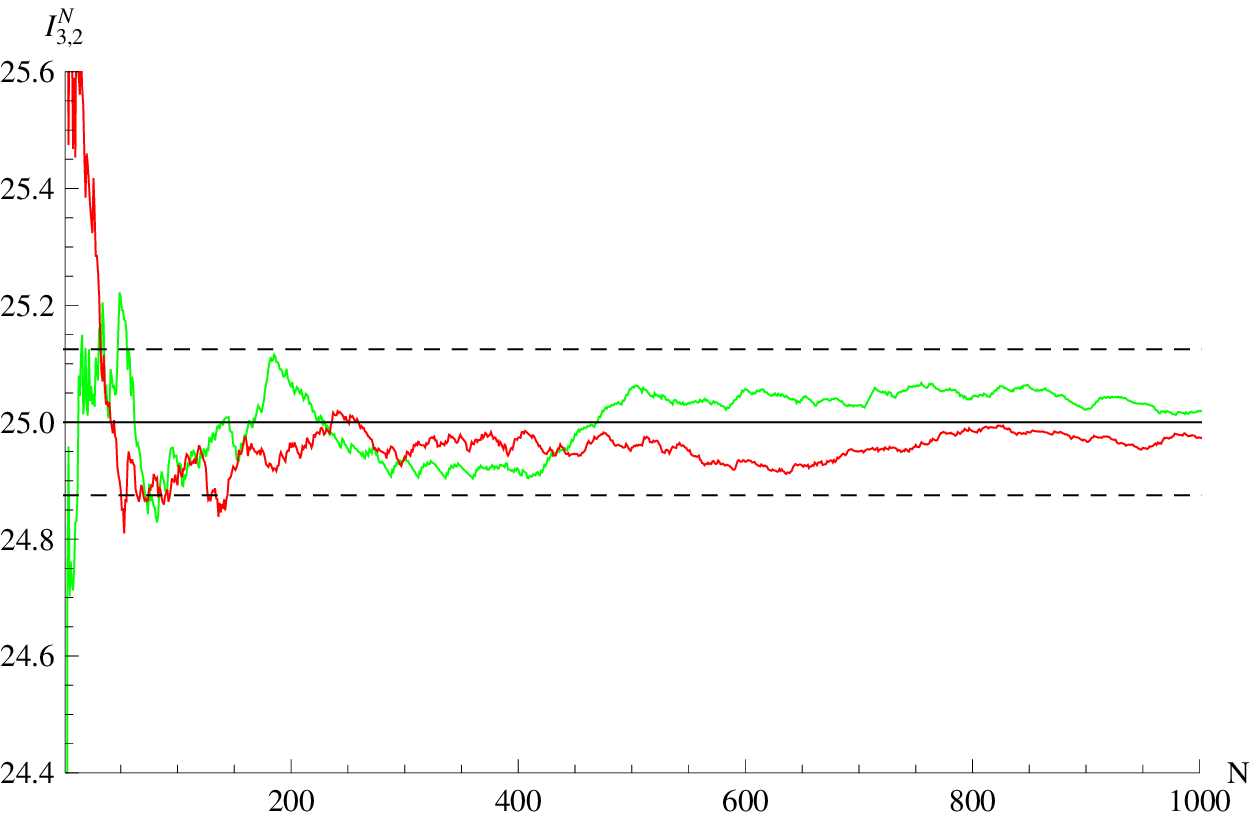}}
\caption{Comparison of random (red) and quasi-random (green) approximation of $I_{3,1}^N$ and $I_{3,2}^N$ for the K-icosahedron. The dashed lines indicate a deviation from the true value of $\pm 0.5\%$.} \label{fig1}
\end{figure}


\section{Concluding remarks}
\label{sec6}

We conclude with several related questions for future investigations.
How can the convergence to the uniform distribution be quantified in our particular setting? And more generally, is there a suitable concept of discrepancy in compact, non-abelian topological groups as it exists for sequenes in $[0,1)^n$ or on $\Ss^{n-1}$, which is amenable to a precise analysis. What are the general upper and lower bounds for the speed of convergence? We recall that the main problem in this context is that the map $T$ is in general not continuous.
Concerning non-continuous integrands, which appear in many integral geometric formulas, it is interesting to ask
which general concept of variation can be used to prove Koksma-Hlawka type results to bound the integration error when approximating such integrals? 
And from a practical point of view, which sequences outperform others significantly?

\subsection*{Acknowledgement}
The author thanks Harald Niederreiter and Robert Tichy for interesting discussions and Peter Hellekalek and Anne Marie Svane for carefully reading preliminary versions of this manuscript.

{\footnotesize 
 }

\end{document}